\documentclass[12pt]{amsart}

\title{The structure of $\Rlimit$-groups}
\author{Ioannis Emmanouil}\thanks{Research funded by the Hellenic
Foundation for Research and Innovation (H.F.R.I.) under the "3rd
Call for H.F.R.I.\ Research Projects to Support Faculty Members
and Researchers", project number 24921}

\newtheorem{Lemma}{Lemma}[section]
\newtheorem{Proposition}[Lemma]{Proposition}
\newtheorem{Theorem}[Lemma]{Theorem}
\newtheorem{Corollary}[Lemma]{Corollary}

\newcommand{\Limit}{\mbox{$\displaystyle{\lim_{\longleftarrow}}$}}
\newcommand{\Limitn}{\mbox{$\displaystyle{\lim}_n\!\!$}}

\newcommand{\Rlimit}{\mbox{$\displaystyle{\lim}^1$}}

\newcommand{\Rlimitn}{\mbox{$\displaystyle{\lim}_n^1\!$}}

\begin{document}

\begin{abstract}
If $(A_n)_n$ is a decreasing filtration of a module $A$
and $\widehat{A} = \Limitn\ A/A_n$, then $\Rlimitn\ A_n$
is identified with the cokernel of the canonical map
$A \longrightarrow \widehat{A}$. In this note, we show
that any $\Rlimit$-group is canonically of that form:
For any inverse sequence of modules $(X_n)_n$ there exists
an inverse sequence $(A_n)_n$ as above and a morphism
$(A_n)_n \longrightarrow (X_n)_n$, depending functorially
on $(X_n)_n$, that induces an isomorphism on $\Rlimit$.
The proof is based on Quillen's small object argument,
as formulated by Eklof and Trlifaj in their investigation
of the existence of enough injective objects in certain
cotorsion pairs, and also uses a construction by Salce that
provides enough projective objects therein.
\end{abstract}

\maketitle

\addtocounter{section}{-1}
\section{Introduction}

\noindent
The right derived functors of the limit functor in module categories were
introduced by Milnor \cite{M} and Yeh \cite{Y}, in order to compute such
topological invariants as the homotopy groups of the limit of a sequence of 
fibrations or the cohomology of an ascending union of spaces. Roos showed 
in \cite{R} that the higher limits vanish in degrees $\geq 2$, if the 
indexing category is the ordered set of natural numbers. (This assumption 
will hold throughout this note.)

The most manageable criterion that implies the vanishing of $\Rlimit$ is the
Mittag-Leffler condition, that was introduced by Grothendieck in \cite{Gro}.
This condition turns out to be equivalent to the vanishing of $\Rlimit$ for
inverse sequences of countable abelian groups \cite{Gra}, but the relation
between the two conditions is slightly more complicated in general; cf.\
\cite[Theorem 1.3]{ABH} or \cite[Corollary 6]{E}. It is shown in \cite{CI}
that the vanishing of $\Rlimit$ is equivalent to a certain transfinite
version of the Mittag-Leffler condition. 

Another class of examples over which the computation of $\Rlimit$ is 
well-understood is provided by decreasing filtrations: If $(A_n)_n$ 
is a decreasing sequence of submodules of a module $A=A_0$ and 
$\widehat{A} = \Limitn\ A/A_n$ is the Hausdorff completion of $A$ with 
respect to the linear topology that is induced by the filtration, then 
$\Rlimitn\ A_n$ is identified with the cokernel of the canonical map 
$A \longrightarrow \widehat{A}$. Therefore, assuming that the linear 
topology on $A$ is already Hausdorff, the group $\Rlimitn\ A_n$ 
measures the obstruction to completeness of $A$. The reader may consult 
\cite[$\S $3.5]{W} for a proof of this result. Our goal in this paper 
is to show that any $\Rlimit$-group is of that form, i.e.\ it coincides 
with the value of $\Rlimit$ on an inverse sequence that is associated 
with a suitable decreasing filtration. More precisely, we prove the 
following result.

\medskip

\noindent
{\bf Theorem.}
{\em For any inverse sequence of modules $(X_n)_n$ there is an another 
inverse sequence of modules $(A_n)_n$ and a morphism of inverse sequences
$f : (A_n)_n \longrightarrow (X_n)_n$, which depends functorially on 
$(X_n)_n$, such that:

(i) the inverse sequence $(A_n)_n$ represents a decreasing filtration of 
$A=A_0$ by submodules,

(ii) the modules $A_n$ are free for all $n$,

(iii) the successive quotients $A_n/A_{n+1}$ are free modules for all $n$ 
and 

(iv) $f$ induces an isomorphism 
$\Rlimitn\ A_n \stackrel{\sim}{\longrightarrow} \Rlimitn\ X_n$.}

\medskip

\noindent
The proof proceeds by working in the abelian category ${\mathcal C}$ of 
inverse sequences of modules. We express $\Rlimit$ as the Ext$^1$-functor
on ${\mathcal C}$ that is associated with a certain inverse sequence and 
consider the cotorsion pair in ${\mathcal C}$ that is cogenerated by that 
inverse sequence. The reader is referred to \cite{ET} for the definition
and basic properties of cotorsion pairs. We use the version of Quillen's 
small object that was formulated by Eklof and Trlifaj \cite{ET, T} and a
construction by Salce \cite{S} to obtain the completeness of that cotorsion 
pair. It will turn out that the result stated above is an immediate consequence 
of this completeness.

\vspace{0.1in}

\noindent
{\em Notations and terminology.} We fix a unital associative ring 
$R$ and consider only left modules over $R$. As mentioned above, 
with the proof of the Theorem in mind, we only consider inverse 
sequences of modules in this paper and refrain from considering 
diagrams of modules indexed by more general directed sets or other 
small categories.

\section{An Ext-description of $\Rlimit$}

\noindent
We consider the abelian category ${\mathcal C}$ of inverse sequences
of modules; it is precisely the category of contravariant functors
from the ordered set $\omega$ to the category of modules. In this
section, we elaborate on the discussion in \cite[$\S $2]{CI} and
identify the $\Rlimit$-functor to the Ext$^1$-functor on
${\mathcal C}$ induced by a suitable inverse sequence, a result
that is certainly known to the experts.

Let $X=(X_n)_n$ be an inverse sequence of modules with structure 
maps $(s_n : X_n \longrightarrow X_{n-1})_n$. We consider the 
linear map 
\[ 1 - S : {\displaystyle{\prod}}_nX_n \longrightarrow 
   {\displaystyle{\prod}}_nX_n , \]
whose composition with the projection onto $X_n$ coincides with 
the projection from the product to $X_n \oplus X_{n+1}$, followed 
by $[1,-s_{n+1}] : X_n \oplus X_{n+1} \longrightarrow X_n$, for all
$n$. Then, $\lim_n X_n = \mbox{ker}(1-S)$ and 
$\Rlimitn\ X_n = \mbox{coker}(1-S)$. The reader is referred to 
Weibel's book \cite[$\S $3.5]{W} for background and general 
properties of the $\Rlimit$-functor.

If $M$ is a module and $n$ a non-negative integer, then we may
consider the inverse sequence $M(n)$, consisting of $M$ in degrees
$\leq n$ and $0$ in degrees $>n$, with all structure maps
$M \longrightarrow M$ given by the identity map of $M$. It is
easily seen that for any inverse sequence $X=(X_i)_i$ there is 
an isomorphism of abelian groups
\[ \phi_n : \mbox{Hom}_{\mathcal C}(M(n),X)
   \stackrel{\sim}{\longrightarrow} \mbox{Hom}_R(M,X_n) , \]
which is natural in both $M$ and $X$. Since exactness in
${\mathcal C}$ is defined degreewise, we conclude that $M(n)$ is
a projective object in ${\mathcal C}$ for all $n \geq 0$, if $M$
is a projective module. We also note that there is an obvious
monomorphism $\iota_n : M(n) \longrightarrow M(n+1)$ in
${\mathcal C}$ for all $n \geq 0$, which is natural in $M$; it
is given by the identity map of $M$ in degrees $\leq n$. Moreover,
the following diagram is commutative for any inverse sequence 
$X=(X_i)_i$
\begin{equation}
\begin{array}{ccc}
 \mbox{Hom}_{\mathcal C}(M(n+1),X) &
 \stackrel{\phi_{n+1}}{\longrightarrow} &
 \mbox{Hom}_R(M,X_{n+1}) \\
 {\scriptstyle{\imath_n^*}} \downarrow & &
 \downarrow {\scriptstyle{(s_{n+1})_*}} \\
 \mbox{Hom}_{\mathcal C}(M(n),X) &
 \stackrel{\phi_{n}}{\longrightarrow} &
 \mbox{Hom}_R(M,X_n)
 \end{array}
\end{equation}
Here, we denote by $s_{n+1} : X_{n+1} \longrightarrow X_n$ 
the structure map of $X$; the additive map $(s_{n+1})_*$ is
the corresponding structure map of the inverse sequence 
$(\mbox{Hom}_R(M,X_i))_i$. This diagram is natural in 
both $M$ and $(X_i)_i$. The colimit $M(\infty) = \mbox{colim}_nM(n)$
is the inverse sequence consisting of $M$ in each degree, with all
structure maps given by the identity of $M$. There is a short exact
sequence in ${\mathcal C}$
\begin{equation}
 0 \longrightarrow {\textstyle{\bigoplus_n}} M(n)
   \stackrel{1-I}{\longrightarrow} {\textstyle{\bigoplus_n}} M(n)
   \longrightarrow M(\infty) \longrightarrow 0 ,
\end{equation}
which is natural in $M$. Here, $1-I$ is defined so that its restriction
to $M(n)$ is the composition of
$(1,-\iota_n) : M (n) \longrightarrow M(n) \oplus M(n+1)$, followed
by the inclusion of $M(n) \oplus M(n+1)$ into the direct sum, for all 
$n$.

\begin{Proposition}
Let $M$ be a module and $X=(X_i)_i$ and inverse sequence of modules.

(i) $\mbox{Hom}_{\mathcal C}(M(\infty),X) \simeq
     \lim_i \mbox{Hom}_R(M,X_i)$

(ii) If $M$ is a projective module, then
     $\mbox{Ext}^1_{\mathcal C}(M(\infty),X) \simeq
     \lim_i^1 \mbox{Hom}_R(M,X_i)$.
\newline
The isomorphisms in (i) and (ii) above are natural in both $M$
and $X$; in particular, these are isomorphisms of right
$\mbox{End}_RM$-modules.
\end{Proposition}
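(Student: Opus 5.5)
Both parts come from applying $\mbox{Hom}_{\mathcal C}(-,X)$ to the short exact sequence (2) and using the natural isomorphisms $\phi_n$ together with the commutative square (1).

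\emph{Step 1: the Hom-sequence.} The sequence (2) gives an exact sequence
\[ 0 \to \mbox{Hom}_{\mathcal C}(M(\infty),X) \to \mbox{Hom}_{\mathcal C}({\textstyle\bigoplus_n} M(n),X) \stackrel{(1-I)^*}{\longrightarrow} \mbox{Hom}_{\mathcal C}({\textstyle\bigoplus_n} M(n),X) \to \mbox{Ext}^1_{\mathcal C}(M(\infty),X) \to \mbox{Ext}^1_{\mathcal C}({\textstyle\bigoplus_n} M(n),X). \]
Since $\mbox{Hom}_{\mathcal C}(\bigoplus_n M(n),X) = \prod_n \mbox{Hom}_{\mathcal C}(M(n),X)$, the maps $\phi_n$ identify this group with $\prod_n \mbox{Hom}_R(M,X_n)$.

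\emph{Step 2: identifying $(1-I)^*$.} The restriction of $1-I$ to $M(n)$ is $(1,-\iota_n)$. Hence the $n$-th component of $(1-I)^*(g_k)_k$ is $g_n - g_{n+1}\circ\iota_n = g_n - \iota_n^*(g_{n+1})$. By (1), under the $\phi$'s this becomes $h_n - (s_{n+1})_*h_{n+1}$. This is exactly the map $1-S$ for the inverse sequence $(\mbox{Hom}_R(M,X_i))_i$, whose structure maps are $(s_{i})_*$. It follows that $\mbox{ker}(1-I)^* \simeq \lim_i \mbox{Hom}_R(M,X_i)$, which proves (i) for arbitrary $M$. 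Likewise the cokernel of $(1-I)^*$ is $\lim^1_i \mbox{Hom}_R(M,X_i)$.

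\emph{Step 3: vanishing of the last term.} This is the only step that needs projectivity, and the only point that requires a remark. If $M$ is projective, each $M(n)$ is projective in ${\mathcal C}$, as noted before the proposition. Therefore $\bigoplus_n M(n)$ is projective, so $\mbox{Ext}^1_{\mathcal C}(\bigoplus_n M(n),X)=0$, equivalently $\mbox{Ext}^1$ converts the coproduct into $\prod_n \mbox{Ext}^1_{\mathcal C}(M(n),X)=0$. Hence the connecting map identifies $\mbox{coker}(1-I)^*$ with $\mbox{Ext}^1_{\mathcal C}(M(\infty),X)$, which gives (ii).

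\emph{Naturality.} The sequence (2), the isomorphisms $\phi_n$, the square (1) and the connecting homomorphism are all natural in $M$ and $X$. Naturality in $X$ and $M$ therefore follows, and the $\mbox{End}_RM$-module statement is naturality in $M$ applied to endomorphisms of $M$.
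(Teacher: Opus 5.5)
Your proposal is correct and follows the paper's proof essentially step for step. You apply $\mbox{Hom}_{\mathcal C}(-,X)$ to (2) and use (1) to identify $(1-I)^*$ with $1-S_*$ on $\prod_n \mbox{Hom}_R(M,X_n)$. For projective $M$, your vanishing of $\mbox{Ext}^1_{\mathcal C}(\bigoplus_n M(n),X)$ is the same observation as the paper's remark that (2) is then a projective resolution of $M(\infty)$.
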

\vspace{-0.05in}
\noindent
{\em Proof.}
(i) The short exact sequence (2) identifies
$\mbox{Hom}_{\mathcal C}(M(\infty),X)$ with the kernel of
\[ (1-I)^* : \mbox{Hom}_{\mathcal C} \! \left(
   \textstyle{\bigoplus_n} M(n),X \right)  \longrightarrow
   \mbox{Hom}_{\mathcal C} \! \left(
   \textstyle{\bigoplus_n} M(n),X \right) , \]
i.e.\ with the kernel of
\[ 1-I^* : {\displaystyle{\prod}}_n
   \mbox{Hom}_{\mathcal C} (M(n),X) \longrightarrow
   {\displaystyle{\prod}}_n
   \mbox{Hom}_{\mathcal C} (M(n),X) . \]
In view of the commutative diagram (1), the latter map is
identified with
\[ 1-S_* : {\displaystyle{\prod}}_n  \mbox{Hom}_R(M,X_n)
   \longrightarrow {\displaystyle{\prod}}_n
   \mbox{Hom}_R(M,X_n) \]
and hence
$\mbox{Hom}_{\mathcal C}(M(\infty),X) =
 \mbox{ker} (1-S_*) = \lim_i \mbox{Hom}_R(M,X_i)$.
Naturality follows, since (1) and (2) are natural in both
$M$ and $X$.

(ii) If $M$ is projective, then (2) is a projective resolution
of $M(\infty)$ in ${\mathcal C}$ and hence we may compute
$\mbox{Ext}^1_{\mathcal C}(M(\infty),X)$ as the cokernel 
of the maps displayed in (i) above. It follows that
$\mbox{Ext}^1_{\mathcal C}(M(\infty),X) =
 \mbox{coker} (1-S_*) = \lim_i^1 \mbox{Hom}_R(M,X_i)$,
naturally in both $M$ and $X$. \hfill $\Box$

\vspace{0.1in}

\noindent
We now specialize the discussion above to the case where
$M=R$ is the regular module and note that the ring of
endomorphisms of the regular module $R$ is the opposite 
ring $R^{op}$.

\begin{Corollary}
For any inverse sequence of modules $X=(X_n)_n$ there are
natural isomorphisms of (left $R$-)modules
$\mbox{Hom}_{\mathcal C}(R(\infty),X) \simeq \lim_nX_n$
and
$\mbox{Ext}^1_{\mathcal C}(R(\infty),X) \simeq \Rlimitn\ X_n$.
\hfill $\Box$
\end{Corollary}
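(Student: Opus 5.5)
This is the specialization $M=R$ of the preceding Proposition, so the plan is to apply parts (i) and (ii) there and then identify the right-hand sides with $\lim_n X_n$ and $\Rlimitn\ X_n$.

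\emph{Applying the Proposition.} The regular module $R$ is free, hence projective, so part (ii) applies as well as part (i). This gives isomorphisms
\[ \mbox{Hom}_{\mathcal C}(R(\infty),X) \simeq \lim_n \mbox{Hom}_R(R,X_n), \qquad
   \mbox{Ext}^1_{\mathcal C}(R(\infty),X) \simeq \Rlimitn\ \mbox{Hom}_R(R,X_n), \]
natural in $X$. By the last sentence of the Proposition, they are also isomorphisms of right $\mbox{End}_R R$-modules.

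\emph{Removing the Hom.} I then use the evaluation isomorphism $\mbox{Hom}_R(R,X_n) \simeq X_n$, $g \mapsto g(1)$. It is natural in $X_n$, so it commutes with the structure maps $(s_{n+1})_*$ and $s_{n+1}$. It therefore gives an isomorphism of inverse sequences $(\mbox{Hom}_R(R,X_n))_n \simeq (X_n)_n$. Applying the functors $\lim$ and $\lim^1$, for instance as the kernel and cokernel of $1-S$, yields natural isomorphisms $\lim_n \mbox{Hom}_R(R,X_n) \simeq \lim_n X_n$ and $\Rlimitn\ \mbox{Hom}_R(R,X_n) \simeq \Rlimitn\ X_n$. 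Composing with the isomorphisms from the Proposition gives the two claimed isomorphisms, and they are natural in $X$.

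\emph{Module structures.} I must check that these are isomorphisms of left $R$-modules. $\mbox{End}_R R \cong R^{op}$, where $r \in R$ corresponds to right multiplication $\rho_r : x \mapsto xr$. A right $R^{op}$-module is the same as a left $R$-module. The right action of $\rho_r$ on $g \in \mbox{Hom}_R(R,X_n)$ is precomposition, $g \circ \rho_r$. Its value at $1$ is $g(r) = r\,g(1)$. So evaluation at $1$ turns the right $\mbox{End}_R R$-action into the left $R$-action on $X_n$.

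Since the isomorphisms in the Proposition are compatible with this action, all the composites above are $R$-linear. The only point needing care is this bookkeeping of the opposite ring; the rest is formal.
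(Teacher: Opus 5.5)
Your proposal is correct and follows the paper's route. The corollary is Proposition 1.1 specialized to $M=R$, with $\mbox{End}_R R \cong R^{op}$ turning the right $\mbox{End}_R R$-action into a left $R$-module structure; your explicit evaluation isomorphism $\mbox{Hom}_R(R,X_n)\simeq X_n$, $g\mapsto g(1)$, and your check that $g\circ\rho_r$ corresponds to $r\,g(1)$ correctly fill in what the paper leaves implicit.
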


\section{The existence of special preenvelopes}

\noindent
In view of Corollary 1.2, the roots of $\Rlimit$ in
${\mathcal C}$ are precisely the roots of
$\mbox{Ext}^1_{\mathcal C}(R(\infty),\_\!\_)$. The
fundamental work by Eklof and Trlifaj \cite{ET} can
be easily extended from a module category to an abelian
category such as ${\mathcal C}$. In this section, we
shall use their argument (adapted in our setting) and
construct preenvelopes in ${\mathcal C}$ by the roots
of $\Rlimit$.

If ${\mathcal C}_0 \subseteq {\mathcal C}$ is a class of
inverse sequences, then an inverse sequence $X$ is said
to be ${\mathcal C}_0$-filtered if there exists an ordinal
number $\tau$ and an ascending family of inverse subsequences
$(X^{\alpha})_{\alpha \leq \tau}$ of $X$, such that:

(i) $X^0=0$ and $X^{\tau} = X$,

(ii) for any limit ordinal $\alpha \leq \tau$ we have
$X^{\alpha} = \bigcup_{\beta < \alpha} X^{\beta}$ and

(iii) for any $\alpha < \tau$ the quotient
$X^{\alpha +1}/X^{\alpha}$ is isomorphic to an inverse
sequence in ${\mathcal C}_0$.
\newline
Let $\mbox{Filt}({\mathcal C}_0)$ be the class of
${\mathcal C}_0$-filtered inverse sequences. We note that
$\mbox{Filt}({\mathcal C}_0)$ is closed under isomorphisms,
contains all direct sums of inverse sequences in
${\mathcal C}_0$ and
$\mbox{Filt}(\mbox{Filt}({\mathcal C}_0)) =
 \mbox{Filt}({\mathcal C}_0)$.
If ${\mathcal C}_0 = \{ C \}$ is a singleton, we denote
$\mbox{Filt}({\mathcal C}_0)$ by $\mbox{Filt}(C)$.

\begin{Lemma}
The class $\mbox{Filt}(R(\infty))$ consists of all inverse
sequences isomorphic to $F(\infty)$ for a suitable free
module $F$.
\end{Lemma}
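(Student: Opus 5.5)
We have to prove two inclusions: every $F(\infty)$ with $F$ free is $R(\infty)$-filtered, and every $R(\infty)$-filtered inverse sequence is isomorphic to some $F(\infty)$ with $F$ free.

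\emph{The easy inclusion.} Write $F=R^{(I)}$ and well-order $I$ as an ordinal $\tau$. The functor $M \mapsto M(\infty)$ is exact and commutes with direct sums and directed unions, since everything is computed degreewise. Hence $F(\infty)=\bigoplus_{\alpha<\tau}R(\infty)$. Taking $X^\alpha=\bigoplus_{\beta<\alpha}R(\infty)$ gives a filtration that is continuous at limit ordinals, and whose successive quotients are $X^{\alpha+1}/X^\alpha\cong R(\infty)$. Alternatively, one can simply quote the remark that $\mbox{Filt}$ contains all direct sums, together with closure under isomorphisms.

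\emph{The converse: the key observation.} Let $X$ be $R(\infty)$-filtered by $(X^\alpha)_{\alpha\le\tau}$. I claim that an inverse sequence $Y=(Y_n)_n$ with structure maps $s_n$ is isomorphic to some $M(\infty)$ if and only if every $s_n$ is bijective. The ``only if'' direction is clear. For the ``if'' direction, take $M=Y_0$ and use the isomorphisms $s_1\cdots s_n : Y_n\to Y_0$.

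\emph{Transfinite induction.} I will show by induction on $\alpha$ that every structure map of $X^\alpha$ is bijective. This holds trivially for $X^0=0$.

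For a successor step, consider $0\to X^\alpha\to X^{\alpha+1}\to R(\infty)\to 0$, which is degreewise exact. The structure maps of the outer terms are isomorphisms, so by the five lemma, applied to the map of short exact sequences in degrees $n$ and $n-1$, the structure maps of $X^{\alpha+1}$ are isomorphisms.

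For a limit step, $X^\alpha_n=\bigcup_{\beta<\alpha}X^\beta_n$ and $s_n$ restricts to $s_n^\beta$ on each $X^\beta_n$. Surjectivity and injectivity of $s_n$ then follow elementwise, since any element or pair of elements lies in some single $X^\beta$.

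\emph{Freeness of $M$.} It follows that $X\cong M(\infty)$ with $M=X_0$, and likewise $X^\alpha\cong M^\alpha(\infty)$ with $M^\alpha=X^\alpha_0$. Evaluating in degree $0$ is exact and preserves unions. So $(M^\alpha)_{\alpha\le\tau}$ is a continuous ascending chain of submodules of $M$ with $M^0=0$, $M^\tau=M$, and $M^{\alpha+1}/M^\alpha\cong R$.

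Each such extension splits because $R$ is projective, so choose $x_\alpha\in M^{\alpha+1}$ mapping to $1\in R$. A standard transfinite induction then shows $M^\alpha=\bigoplus_{\beta<\alpha}Rx_\beta$, with each $Rx_\beta$ free: the successor step uses the splitting, and the limit step uses continuity. Hence $M$ is free on $\{x_\beta\}$, which proves the lemma.

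\emph{Expected difficulty.} The only mildly delicate point is the limit step, where one must check that bijectivity of the structure maps passes to unions. This works precisely because the filtration is continuous and each $X^\beta$ is an inverse subsequence, so its structure maps are restrictions of those of $X$. The rest of the argument is routine.
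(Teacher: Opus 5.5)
Your proof is correct and follows the paper's argument. In one direction $F(\infty)$ is a direct sum of copies of $R(\infty)$; in the other, a transfinite induction shows that all structure maps of each $X^{\alpha}$ are bijective, so $X \simeq X_0(\infty)$ with $X_0$ filtered by $R$ and hence free. You supply the details the paper leaves to the reader: the five lemma at successor steps, the elementwise check at limit steps, and the splitting argument for freeness.
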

\vspace{-0.05in}
\noindent
{\em Proof.}
If $F$ is free with basis $\Lambda$, then $F(\infty)$ is
the direct sum of $\Lambda$ copies of $R(\infty)$ and hence
$F(\infty) \in \mbox{Filt}(R(\infty))$. Conversely, assume
that $X = (X_n)_n \in \mbox{Filt}(R(\infty))$ and let
$(X^{\alpha})_{\alpha \leq \tau}$ be an ascending family
of inverse subsequences of $X$, as in the definition. Then,
for each $n$, the module $X_n$ is filtered by $R$ and hence
it is free. Moreover, we may use transfinite induction to
show that the structure maps of the inverse sequence
$X^{\alpha}$ are bijective for all $\alpha \leq \tau$. In
particular, the structure maps of $X = X_{\tau}$ are
bijective and hence $X \simeq X_0(\infty)$. \hfill $\Box$

\vspace{0.1in}

\noindent
The following result is essentially \cite[Theorem 2]{ET},
adapted in our setting; see also \cite[Theorem 2.2]{T}.
We provide the details of the proof for the reader's
convenience.

\begin{Theorem}
For any inverse sequence $X=(X_n)_n$ there is a functorial
short exact sequence of inverse sequences
\[ 0 \longrightarrow (X_n)_n \longrightarrow (Y_n)_n
     \longrightarrow (Z_n)_n \longrightarrow 0 , \]
where $\Rlimitn\ Y_n=0$ and $(Z_n)_n \simeq F(\infty)$ for
a free module $F$.
\end{Theorem}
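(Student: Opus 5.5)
We prove the statement with the Eklof--Trlifaj small object argument, carried out in ${\mathcal C}$ with the single test object $R(\infty)$ and its projective presentation (2). By Corollary 1.2, $\Rlimitn\ Y_n = 0$ is equivalent to $\mbox{Ext}^1_{\mathcal C}(R(\infty),Y)=0$. By Proposition 1.1(ii) and the proof there, every element of $\mbox{Ext}^1_{\mathcal C}(R(\infty),Y)$ is represented by a morphism $h : K \longrightarrow Y$, where $K = \bigoplus_n R(n)$ and (2) is written as $0 \to K \stackrel{1-I}{\longrightarrow} P \to R(\infty) \to 0$ with $P=\bigoplus_n R(n)$ projective. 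The class is zero exactly when $h$ extends along $1-I$.

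We build a continuous chain $(Y^\alpha)_{\alpha \leq \omega}$. Set $Y^0 = X$. Given $Y^\alpha$, let $H_\alpha = \mbox{Hom}_{\mathcal C}(K,Y^\alpha)$, and form the pushout of
\[ K^{(H_\alpha)} \stackrel{\oplus(1-I)}{\longrightarrow} P^{(H_\alpha)} \]
along the canonical map $K^{(H_\alpha)} \to Y^\alpha$. Call the pushout $Y^{\alpha+1}$. Then $Y^\alpha \to Y^{\alpha+1}$ is a monomorphism with cokernel $R(\infty)^{(H_\alpha)} \simeq F_\alpha(\infty)$, where $F_\alpha$ is free with basis $H_\alpha$. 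At limits take unions. Since $H_\alpha$ is a canonical set, the construction is functorial in $X$. A morphism $X \to X'$ induces maps $H_\alpha \to H'_\alpha$ by composition, and hence compatible maps of the pushouts.

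Set $Y = Y^\omega$ and $Z = Y/X$. The quotient $Z$ is filtered by the subquotients $Y^{\alpha+1}/Y^\alpha \simeq F_\alpha(\infty)$, each of which lies in $\mbox{Filt}(R(\infty))$. Since $\mbox{Filt}(\mbox{Filt}({\mathcal C}_0)) = \mbox{Filt}({\mathcal C}_0)$, we get $Z \in \mbox{Filt}(R(\infty))$, so Lemma 2.1 gives $Z \simeq F(\infty)$ with $F$ free. Here one could even observe directly that the filtration splits degreewise and the structure maps of $Z$ are isomorphisms.

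The main step, and the expected obstacle, is showing $\mbox{Ext}^1_{\mathcal C}(R(\infty),Y)=0$, i.e.\ that every $h : K \to Y^\omega$ extends along $1-I$. For this we need $K$ to be "$\omega$-small": every morphism $K \to \bigcup_\alpha Y^\alpha$ should factor through some $Y^\alpha$. This is not automatic, because $K = \bigoplus_n R(n)$ is countably generated and a countable union of elements may not land in a single stage. Each summand $R(n)$ is finitely generated (by the element $1$ in degree $n$), so $h|_{R(n)}$ factors through some $Y^{\alpha_n}$, but the indices $\alpha_n$ may be unbounded.

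The fix, following Eklof--Trlifaj, is to run the chain up to a regular cardinal $\kappa > \aleph_0$, for instance $\kappa = \aleph_1$, instead of stopping at $\omega$. Then any countable family of stages is bounded below $\kappa$, so $h$ factors through some $Y^\alpha$ with $\alpha<\kappa$. By construction $h$ then extends to $P \to Y^{\alpha+1} \subseteq Y^\kappa$. The filtration argument above goes through unchanged for $\tau=\kappa$. Functoriality is preserved because $\kappa$ is fixed independently of $X$. Setting $Y = Y^\kappa$ then gives the required functorial short exact sequence.
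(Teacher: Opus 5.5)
Your proposal is correct and follows the paper's proof essentially step for step. You use the Eklof--Trlifaj small object argument with the presentation (2) of $R(\infty)$, taking pushouts along all morphisms $\bigoplus_n R(n)\to Y^{\alpha}$ and running the chain to $\tau=\aleph_1$ so that the countably many stages containing the components $y_n$ of a morphism $\bigoplus_n R(n)\to Y$ are bounded, and you finish with Lemma 2.1 applied to the induced $\mbox{Filt}(R(\infty))$-filtration of $Z=Y/X$; your initial chain of length $\omega$ is correctly diagnosed and abandoned for exactly the reason that leads the paper to choose $\tau=\aleph_1$.
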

\vspace{-0.05in}
\noindent
{\em Proof.}
We consider the projective resolution
\begin{equation}
 0 \longrightarrow {\textstyle{\bigoplus_n}} R(n)
   \stackrel{1-I}{\longrightarrow}
   {\textstyle{\bigoplus_n}} R(n)
   \longrightarrow R(\infty) \longrightarrow 0
\end{equation}
of $R(\infty)$ (cf.\ (2)) and let $\tau = \aleph_1$. We
shall construct an ascending union of inverse sequences
$(Y^{\alpha})_{\alpha \leq \tau}$, such that:

(i) $Y^0=X$,

(ii) for any limit ordinal $\alpha \leq \tau$ we have
$Y^{\alpha} = \bigcup_{\beta < \alpha} Y^{\beta}$ and

(iii) for any $\alpha < \tau$ the quotient
$Y^{\alpha +1}/Y^{\alpha}$ is isomorphic to a direct sum
of copies of $R(\infty)$ and for any morphism
$f : \bigoplus_nR(n) \longrightarrow Y^{\alpha}$ there
is a morphism
$g : \bigoplus_nR(n) \longrightarrow Y^{\alpha +1}$
that fits into a commutative diagram
\begin{equation}
\begin{array}{ccc}
   {\textstyle{\bigoplus_n}} R(n) &
   \stackrel{1-I}{\longrightarrow} &
   {\textstyle{\bigoplus_n}} R(n) \\
   \!\!\! {\scriptstyle{f}} \downarrow & &
   \downarrow {\scriptstyle{g}} \\
   Y^{\alpha} & \stackrel{\iota_{\alpha}}{\longrightarrow} &
   Y^{\alpha +1}
\end{array}
\end{equation}
where $\iota_{\alpha}$ denotes the inclusion.

We define the $Y^{\alpha}$'s by transfinite induction. Of 
course, we start with $Y^0=X$ and proceed as dictated by 
(ii) for limit ordinals. Assuming that $\alpha < \tau$ and 
that we have already constructed the inverse sequence 
$Y^{\alpha}$, we consider the set $\Lambda^{\alpha}$ of 
all morphisms 
$f : \bigoplus_nR(n) \longrightarrow Y^{\alpha}$ and the
short exact sequence
\[ 0 \longrightarrow \! \left[ {\textstyle{\bigoplus_n}}
     R(n) \right] \! ^{(\Lambda^{\alpha})}
     \stackrel{J}{\longrightarrow}
     \! \left[ {\textstyle{\bigoplus_n}}
     R(n) \right] \! ^{(\Lambda^{\alpha})}
     \longrightarrow R(\infty) ^{(\Lambda^{\alpha})}
     \longrightarrow 0 , \]
which is obtained as the direct sum of $\Lambda^{\alpha}$
copies of (3). Here, $J = (1-I)^{(\Lambda^{\alpha})}$ and
hence we have $J \circ \nu_f = \nu_f \circ (1-I)$, where
$\nu_f : \bigoplus_nR(n) \longrightarrow \left[
 {\textstyle{\bigoplus_n}} R(n) \right]
 \! ^{(\Lambda^{\alpha})}$
denotes the embedding associated with $f \in \Lambda^{\alpha}$.
There is a natural map
$\Phi : \left[ {\textstyle{\bigoplus_n}} R(n) \right] \!
 ^{(\Lambda^{\alpha})} \longrightarrow Y^{\alpha}$,
such that $\Phi \circ \nu_f = f$ for any
$f \in \Lambda^{\alpha}$. We now form the pushout of $J$
and $\Phi$, that defines $Y^{\alpha +1}$, $\iota_{\alpha}$
and $\Psi$ as follows
\[ \begin{array}{ccccccccc}
    0 & \longrightarrow
      & \left[ {\textstyle{\bigoplus_n}}
        R(n) \right] \! ^{(\Lambda^{\alpha})}
      & \stackrel{J}{\longrightarrow}
      & \left[ {\textstyle{\bigoplus_n}}
        R(n) \right] \! ^{(\Lambda^{\alpha})}
      & \longrightarrow
      & R(\infty) ^{(\Lambda^{\alpha})}
      & \longrightarrow & 0 \\
    & & \!\!\! {\scriptstyle{\Phi}} \downarrow
    & & \!\!\! {\scriptstyle{\Psi}} \downarrow
    & & \parallel & & \\
    0 & \longrightarrow & Y^{\alpha}
      & \stackrel{\iota_{\alpha}}{\longrightarrow}
      & Y^{\alpha +1} & \longrightarrow
      & R(\infty) ^{(\Lambda^{\alpha})}
      & \longrightarrow & 0
    \end{array} \]
We note that for any morphism
$f : \bigoplus_nR(n) \longrightarrow Y^{\alpha}$ we have
\[ \iota_{\alpha} \circ f =
   \iota_{\alpha} \circ \Phi \circ \nu_f =
   \Psi \circ J \circ \nu_f =
   \Psi \circ \nu_f \circ (1-I) . \]
Therefore, we obtain the commutative diagram (4), by letting
$g = \Psi \circ \nu_f$.

Having constructed the family $(Y^{\alpha})_{\alpha \leq \tau}$,
we define $Y = Y^{\tau} = \bigcup_{\alpha < \tau} Y^{\alpha}$
and let $\jmath_{\alpha} : Y^{\alpha} \longrightarrow Y$ denote
the inclusion for all $\alpha < \tau$. If
$Y^{\alpha} = (Y^{\alpha}_n)_n$ for all $\alpha < \tau$ and
$Y = (Y_n)_n$, then $Y_n$ is the ascending union of its submodules
$(Y^{\alpha}_n)_{\alpha < \tau}$ for all $n \geq 0$. In order to
show that $\Rlimitn\ Y_n = 0$, it suffices (in view of Corollary
1.2) to show that $\mbox{Ext}^1_{\mathcal C}(R(\infty),Y)=0$. In
other words, it suffices to show that any morphism
$h : \bigoplus_nR(n) \longrightarrow Y$ factors through $1-I$.
Since
\begin{equation}
 \mbox{Hom}_{\mathcal C} \! \left( {\textstyle{\bigoplus_n}}
 R(n),Y \right) = {\displaystyle{\prod}}_n
 \mbox{Hom}_{\mathcal C}(R(n),Y) \simeq
 {\displaystyle{\prod}}_n \mbox{Hom}_R(R,Y_n) =
 {\displaystyle{\prod}}_n Y_n , 
\end{equation}
the morphism $h$ corresponds to a sequence 
$(y_n)_n \in \prod_nY_n$. For each $n$, there exists an ordinal
number $\alpha(n) < \tau$, such that
$y_n \in Y^{\alpha(n)}_n \subseteq Y_n$. Since $\tau = \aleph_1$
and the supremum of a sequence of countable ordinals is also
countable, there exists an ordinal $\alpha < \tau$, such that
$y_n \in Y^{\alpha}_n \subseteq Y_n$ for all $n$; then,
$(y_n)_n \in \prod_nY^{\alpha}_n \subseteq \prod_nY_n$. The 
naturality of the isomorphism (5) with respect to the inverse 
sequence $Y$ implies that the following diagram is commutative
\[ \begin{array}{ccc}
   \mbox{Hom}_{\mathcal C} \! \left( {\textstyle{\bigoplus_n}}
   R(n),Y^{\alpha} \right) & \stackrel{\sim}{\longrightarrow} & 
   {\displaystyle{\prod}}_n Y^{\alpha}_n \\
   {\scriptstyle{(\jmath_{\alpha})_*}} \downarrow & & \downarrow \\
   \mbox{Hom}_{\mathcal C} \! \left( {\textstyle{\bigoplus_n}}
   R(n),Y \right) & \stackrel{\sim}{\longrightarrow} & 
   {\displaystyle{\prod}}_n Y_n 
   \end{array} \]
Here, the unlabelled vertical arrow is the inclusion. We conclude 
that $h$ factors through $Y^{\alpha}$ as the composition 
$\jmath_{\alpha} \circ f$, for a suitable morphism 
$f : \bigoplus_nR(n) \longrightarrow Y^{\alpha}$. If 
$g : \bigoplus_nR(n) \longrightarrow Y^{\alpha +1}$ is a morphism
making the diagram (4) above commutative, then
\[ h = \jmath_{\alpha} \circ f
     = \jmath_{\alpha +1} \circ \imath_{\alpha} \circ f
     = \jmath_{\alpha +1} \circ g \circ (1-I) \]
and hence $h$ factors through $1-I$, as needed.

Finally, we note that the quotient inverse sequence $Z=Y/X$ admits
a continuous ascending filtration by its inverse subsequences
$(Y^{\alpha}/X)_{\alpha \leq \tau} =
 (Y^{\alpha}/Y^0)_{\alpha \leq \tau}$.
The successive quotients
$(Y^{\alpha +1}/Y^0)/(Y^{\alpha}/Y^0) \simeq Y^{\alpha +1}/Y^{\alpha}$
are contained in $\mbox{Filt}(R(\infty))$ for all $\alpha < \tau$
and hence $Z$ is contained in
$\mbox{Filt}(\mbox{Filt}(R(\infty))) = \mbox{Filt}(R(\infty))$.
Invoking Lemma 2.1, it follows that $Z \simeq F(\infty)$ for a
free module $F$. \hfill $\Box$

\vspace{0.1in}

\noindent
{\bf Remarks 2.3.}
(i) Let $X=(X_n)_n$ be an inverse sequence and assume that
$\mbox{card} \, X_n \leq \kappa$ for all $n$, where $\kappa$
is an infinite cardinal with $\kappa \geq \mbox{card} \, R$.
A careful examination of the proof of Theorem 2.2 shows that the
inverse sequence $Y=(Y_n)_n$ therein is such that
$\mbox{card} \, Y_n \leq \kappa^{\aleph_0}$ for all $n$. In 
particular, the rank of the free module $F$ is bounded by 
$\kappa^{\aleph_0}$.

(ii) The short exact sequence of inverse sequences in the statement
of Theorem 2.2 is split in each degree and hence we obtain a 
(non-canonical) identification $Y_n = X_n \oplus F$ for all $n$. 
Then, the structure map $Y_n \longrightarrow Y_{n-1}$ of $Y$ is 
identified with
\[ \left( \begin{array}{cc} s_n & t_{n-1} \\ 0 & 1 \end{array}
   \right) : X_n \oplus F \longrightarrow X_{n-1} \oplus F , \]
where $s_n : X_n \longrightarrow X_{n-1}$ is the structure map
of $X$ and $t_{n-1} : F \longrightarrow X_{n-1}$ is some linear 
map.

(iii) If the short exact sequence of inverse sequences in the 
statement of Theorem 2.2 is split (as a short exact sequence 
of inverse sequences), then $\Rlimitn\ X_n=0$. Conversely, if 
$\Rlimitn\ X_n=0$ and $F$ is a free module with basis $\Lambda$,
then we may use Proposition 1.1(ii) and conclude that 
\[ \mbox{Ext}^1_{\mathcal C}(F(\infty),X) = 
   \Rlimitn\ \mbox{Hom}_R(F,X_n) \simeq 
   \Rlimitn\ \! \left( X_n^{\Lambda} \right) = 
   \left( \Rlimitn\ X_n \right) \! ^{\Lambda} = 0 . \]
Hence, the short exact sequence in the statement of Theorem 
2.2 splits (in ${\mathcal C}$). In that case, and only in that 
case, we may choose the identifications $Y_n = X_n \oplus F$ 
in (ii) above, so that the structure maps 
$Y_n \longrightarrow Y_{n-1}$ are given by the 
diagonal matrix
\[ \left( \begin{array}{cc} s_n & 0 \\ 0 & 1 \end{array}
   \right) : X_n \oplus F \longrightarrow X_{n-1} \oplus F , \]
where $s_n : X_n \longrightarrow X_{n-1}$ is the structure map
of $X$.

(iv) Assume that we have chosen the identifications 
$Y_n = X_n \oplus F$ for all $n$, so that the structure maps 
$Y_n \longrightarrow Y_{n-1}$ of $Y$ are given by upper 
triangular matrices as in (ii) above, involving linear maps 
$(t_{n-1} : F \longrightarrow X_{n-1})_n$. If $\Rlimitn\ X_n=0$,
then $\Rlimitn\ \mbox{Hom}_R(F,X_n) = 0$ as well; cf.\ (iii) 
above. Therefore, there exists a sequence of linear maps 
$(\tau_{n-1} : F \longrightarrow X_{n-1})_n$, so that 
$t_{n-1} = \tau_{n-1}-s_n \circ \tau_n$ for all $n$. Then, the 
map $F \longrightarrow X_n \oplus F = Y_n$, which is given by 
letting $z \mapsto (\tau_n(z),z)$, $z \in F$, is easily seen 
to be the degree $n$ component of a splitting (in ${\mathcal C}$) 
of the epimorphism $Y \longrightarrow F(\infty)$ in Theorem 2.2. 
This reaffirms that the short exact sequence in Theorem 2.2 is 
indeed split in this case, as we noted in (iii) above.
\addtocounter{Lemma}{1}

\section{The existence of special precovers}

\noindent
The notion of a cotorsion pair was introduced (for classes of
abelian groups) by Salce in \cite{S}. The importance of this
notion in the study of approximations of modules is analyzed
by G\"{o}bel and Trlifaj in \cite{GT}. In this section, we use
a trick by Salce that shows the equivalence between the existence
of enough injectives and the existence of enough projectives for
a cotorsion pair in a module category (see [loc.cit.] for the
definitions), to reformulate Theorem 2.2 in a form that will
directly imply the Theorem stated in the Introduction.

\begin{Theorem}
For any inverse sequence $X=(X_n)_n$ there is a functorial
short exact sequence of inverse sequences
\[ 0 \longrightarrow (Y_n)_n \longrightarrow (A_n)_n
     \longrightarrow (X_n)_n \longrightarrow 0 , \]
where $\Rlimitn\ Y_n=0$, the inverse sequence $(A_n)_n$
consists of free modules and its structure maps
$A_n \longrightarrow A_{n-1}$ are (split) monomorphisms
with a free cokernel in each degree.
\end{Theorem}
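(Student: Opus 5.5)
Following Salce's trick, we begin with a functorial epimorphism from a projective object onto $X$. The natural choice is $P = \bigoplus_n R^{(X_n)}(n)$, together with the map $\pi : P \longrightarrow X$ whose restriction to the summand $R^{(X_n)}(n)$ corresponds, via $\phi_n$, to the canonical map $R^{(X_n)} \longrightarrow X_n$ sending basis elements to elements. This is an epimorphism in each degree and is functorial in $X$. Let $K = \ker \pi$. We then apply Theorem 2.2 to $K$ and obtain a functorial short exact sequence $0 \longrightarrow K \longrightarrow Y \longrightarrow F(\infty) \longrightarrow 0$ with $\Rlimitn\ Y_n = 0$ and $F$ free. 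Next we form the pushout of $K \hookrightarrow Y$ along $K \hookrightarrow P$; call it $A$. This yields two short exact sequences, $0 \longrightarrow Y \longrightarrow A \longrightarrow X \longrightarrow 0$ (cokernel $P/K \simeq X$) and $0 \longrightarrow P \longrightarrow A \longrightarrow F(\infty) \longrightarrow 0$. The first is the required sequence, and functoriality follows because every construction (the choice of $P$, Theorem 2.2, pushouts) is functorial.

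It remains to verify the structure of $A$. Since $F(\infty)$ is degreewise free, the second sequence splits in each degree, giving $A_n \simeq P_n \oplus F$. Here $P_n = \bigoplus_{m \geq n} R^{(X_m)}$ is free, so $A_n$ is free. For the structure maps, observe that in $P$ the structure map $P_n \longrightarrow P_{n-1}$ is the inclusion of $\bigoplus_{m\geq n} R^{(X_m)}$ into $\bigoplus_{m\geq n-1} R^{(X_m)}$. This is a split monomorphism with free cokernel $R^{(X_{n-1})}$. In $F(\infty)$ the structure maps are identities.

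The key step is to apply the snake lemma to the map of short exact sequences
\[ 0 \longrightarrow P_n \longrightarrow A_n \longrightarrow F \longrightarrow 0 \]
mapping to the corresponding sequence in degree $n-1$. The vertical maps are the structure maps: injective with cokernel $R^{(X_{n-1})}$ on the left, and the identity on the right. Hence $A_n \longrightarrow A_{n-1}$ is injective, and its cokernel is isomorphic to $\mbox{coker}(P_n \to P_{n-1}) = R^{(X_{n-1})}$, which is free. A monomorphism with free (hence projective) cokernel splits, so the structure maps of $A$ are split monomorphisms with free cokernel, as claimed.

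The main obstacle is really just bookkeeping: one must make sure the projective cover $P$ has injective structure maps with free cokernels, so that this property passes to $A$ via the snake lemma. The choice of $P$ as a direct sum of the $R^{(X_n)}(n)$, rather than an arbitrary projective, is exactly what guarantees this. The remaining ingredient, $\Rlimitn\ Y_n = 0$, comes directly from Theorem 2.2.
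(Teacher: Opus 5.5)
Your proposal is correct and matches the paper's proof essentially step for step. The paper also takes $Q=\bigoplus_n P_n(n)$, with $P_n$ the free module on the underlying set of $X_n$, applies Theorem 2.2 to $K=\ker\pi$, and forms the pushout of $K\to Q$ and $K\to Y$; it then applies the snake lemma to $0\to Q_n\to A_n\to F\to 0$ to conclude that $A_n\to A_{n-1}$ is injective with free cokernel $P_{n-1}$.
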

\vspace{-0.05in}
\noindent
{\em Proof.}
For any $n \geq 0$, we choose a free module $P_n$ together
with an epimorphism $P_n \longrightarrow X_n$. This choice
can be made functorially, by letting $P_n$ be the free module
on the underlying set of $X_n$. There is an induced morphism 
of inverse sequences $\pi_n : P_n(n) \longrightarrow X$, which
is surjective in degree $n$. The $\pi_n$'s induce a morphism
$\pi : \bigoplus_nP_n(n) \longrightarrow X$, which is surjective
in each degree; hence, $\pi$ is an epimorphism of inverse 
sequences. Let 
$Q = \bigoplus_nP_n(n)$, $K = \mbox{ker} \, \pi$ and consider
the short exact sequence of inverse sequences
\[ 0 \longrightarrow K \longrightarrow Q
     \stackrel{\pi}{\longrightarrow} X \longrightarrow 0 . \]
If $Q = (Q_n)_n$, then $Q_n = \bigoplus_{i=n}^{\infty}P_i$ and
the structure map $Q_n \longrightarrow Q_{n-1}$ is the obvious
(split) monomorphism with cokernel $P_{n-1}$ for all $n$.

We now apply Theorem 2.2 to the inverse sequence $K$ and obtain
a functorial short exact sequence of inverse sequences
\[ 0 \longrightarrow K \longrightarrow Y
     \longrightarrow F(\infty) \longrightarrow 0 , \]
where $Y = (Y_n)_n$ is such that $\Rlimitn\ Y_n=0$ and $F$ is
a free module. We consider the pushout diagram of the two
monomorphisms $K \longrightarrow Q$ and $K \longrightarrow Y$
in ${\mathcal C}$, defining the inverse sequence $A = (A_n)_n$
as follows
\begin{equation}
\begin{array}{ccccccccc}
    & & 0 & & 0 & & & & \\
    & & \downarrow & & \downarrow & & & & \\
    0 & \longrightarrow & K & \longrightarrow
    & Q & \stackrel{\pi}{\longrightarrow} & X
      & \longrightarrow & 0 \\
    & & \downarrow & & \downarrow & & \parallel & & \\
    0 & \longrightarrow & Y & \longrightarrow
    & A & \longrightarrow & X & \longrightarrow
    & 0 \\
    & & \downarrow & & \downarrow & & & & \\
    & & F(\infty) & = & F(\infty) & & & & \\
    & & \downarrow & & \downarrow & & & & \\
    & & 0 & & 0 & & & &
\end{array} 
\end{equation}
The vertical short exact sequence in the middle implies that
$A_n \simeq Q_n \oplus F$ is a free module for all $n$. An
application of the snake lemma to the following diagram
\[ \begin{array}{ccccccccc}
    0 & \longrightarrow & Q_n & \longrightarrow
    & A_n & \longrightarrow & F & \longrightarrow & 0 \\
    & & \downarrow & & \downarrow & & \parallel & & \\
    0 & \longrightarrow & Q_{n-1} & \longrightarrow
    & A_{n-1} & \longrightarrow & F & \longrightarrow & 0 \\
   \end{array} \]
shows that the structure map $A_n \longrightarrow A_{n-1}$ 
is a monomorphism, whose cokernel coincides with the cokernel 
of the structure map $Q_n \longrightarrow Q_{n-1}$, i.e.\ 
with $P_{n-1}$, for all $n$. In particular, the structure maps 
of the inverse sequence $A$ are (split) monomorphisms with a 
free cokernel in all degrees. Hence, the horizontal short 
exact sequence in the middle of diagram (6) satisfies all 
requirements in the statement. \hfill $\Box$

\vspace{0.1in}

\noindent
{\bf Remarks 3.2.}
(i) Let $X=(X_n)_n$ be an inverse sequence and assume that
$\mbox{card} \, X_n \leq \kappa$ for all $n$, where $\kappa$
is an infinite cardinal with $\kappa \geq \mbox{card} \, R$.
A careful examination of the proof of Theorem 3.1 shows that
the inverse sequence $K=(K_n)_n$ therein is such that
$\mbox{card} \, K_n \leq \kappa$ for all $n$. It follows from 
Remark 2.3(i) that the inverse sequence $Y=(Y_n)_n$ in Theorem 
3.1 is such that $\mbox{card} \, Y_n \leq \kappa^{\aleph_0}$ 
for all $n$ and the free module $F$ therein has rank 
$\leq \kappa^{\aleph_0}$. Therefore, the inverse sequence 
$A=(A_n)$ consists of free modules of rank 
$\leq \kappa^{\aleph_0}$ in all degrees.\footnote{In particular, 
if $R={\mathbb Z}$ and $X$ is an inverse sequence of countable 
abelian groups, then the inverse sequences $Y=(Y_n)_n$ and 
$A=(A_n)_n$ in Theorem 3.1 consist of abelian groups of 
cardinality $\leq 2^{\aleph_0}$.}

(ii) We adopt the notation in the proof of Theorem 3.1. If we
identify $A_n$ with the direct sum $Q_n \oplus F$ for all $n$,
then the structure map
$A_n \longrightarrow A_{n-1}$ of $A$ is identified with
\[ \left( \begin{array}{cc} \jmath_n & l_{n-1} \\ 0 & 1 
   \end{array} \right) \! : Q_n \oplus F \longrightarrow 
   Q_{n-1} \oplus F , \]
where $\jmath_n : Q_n \longrightarrow Q_{n-1}$ is the structure
map of $(Q_n)_n$ (a split monomorphism with cokernel $P_{n-1}$)
and $l_{n-1} : F \longrightarrow Q_{n-1}$ is some linear map.
The filtration of $Q_0 = \bigoplus_iP_i$ by the $Q_n$'s induces 
a linear topology therein, which is Hausdorff and the completion 
$\widehat{Q_0}$ is identified with the direct product $\prod_iP_i$, 
so that 
$\Rlimitn\ Q_n = \widehat{Q_0}/Q_0 = \prod_iP_i/\bigoplus_iP_i$.
Therefore, the 6-term lim-lim$^1$ exact sequence that is associated 
with the vertical short exact sequence in the middle of diagram (6) 
reduces to an exact sequence
\begin{equation}
 0 \longrightarrow \Limitn\ A_n \longrightarrow F 
   \stackrel{\delta}{\longrightarrow} \widehat{Q_0}/Q_0
   \longrightarrow \Rlimitn\ A_n \longrightarrow 0 . 
\end{equation}
For any $z \in F$ we consider the sequence 
$(l_n(z))_n$ in $Q_0$. Since $l_n(z) \in Q_n$ for all $n$, 
the infinite series $\sum_nl_n(z)$ converges in the completion 
$\widehat{Q_0}$. We leave it to the reader to verify that the 
map $\delta$ in (7) above maps $z \in F$ onto the class of 
$\sum_nl_n(z) \in \widehat{Q_0}$ in the quotient $\widehat{Q_0}/Q_0$.
\addtocounter{Lemma}{1}

\vspace{0.1in}

\noindent
We can now prove the Theorem stated in the Introduction.

\vspace{0.1in}

\noindent
{\em Proof.}
In view of Theorem 3.1, the inverse sequence $(X_n)_n$
fits into a short exact sequence
\[ 0 \longrightarrow (Y_n)_n \longrightarrow (A_n)_n
     \stackrel{p}{\longrightarrow} (X_n)_n
     \longrightarrow 0 , \]
where the inverse sequences $(Y_n)_n$ and $(A_n)_n$ enjoy
the properties stated therein. Then, $(A_n)_n$ is essentially
a filtration of the free module $A_0$ by free submodules,
such that the successive quotients $A_n/A_{n+1}$ are free
for all $n$. Moreover, the 6-term lim-lim$^1$ exact sequence
and the triviality of $\Rlimitn\ Y_n$ imply that $p$ induces
an isomorphism
$\Rlimitn\ A_n \stackrel{\sim}{\longrightarrow} \Rlimitn\ X_n$.
\hfill $\Box$

\vspace{0.1in}

\noindent
The consequence of Theorem 3.1 that is presented in Proposition
3.4 below was communicated to me by George Raptis.

\begin{Lemma}
Let $X=(X_n)_n$ be an inverse sequence and consider the inverse 
sequences $Y=(Y_n)_n$ and $A=(A_n)_n$, as well as the short exact 
sequence of inverse sequences 
\[ 0 \longrightarrow Y \stackrel{\nu}{\longrightarrow} A 
     \longrightarrow X \longrightarrow 0 \]
of Theorem 3.1. Then:

(i) The canonical map $\lim A \longrightarrow A_n$ is injective 
for all $n$.
  
(ii) The canonical map $\lim Y \longrightarrow Y_n$ is injective 
for all $n$.

(iii) If $W = (W_n)_n$ is the inverse sequence which is defined 
by letting $W_n$ be the cokernel of the canonical map 
$\lim Y \longrightarrow Y_n$ for all $n$, then 
$\Limitn\ W_n = \Rlimitn\ W_n = 0$.\footnote{In other words, 
the inverse sequence $W$ is {\em local}, in the sense of 
\cite[$\S $2]{CI}.}
\end{Lemma}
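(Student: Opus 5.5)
Part (i) follows from the structure of $A$ given by Theorem 3.1: all structure maps $A_m \longrightarrow A_{m-1}$ are monomorphisms. So $\lim A$ is identified with $\bigcap_m A_m$ inside $A_0$, where each $A_m$ is viewed as a submodule of $A_0$ via the injective composite $A_m \longrightarrow A_0$. The canonical map $\lim A \longrightarrow A_n$ is the inclusion of this intersection into $A_n$, and hence it is injective. Concretely, an element of $\lim A$ is a compatible family $(a_m)_m$. Since all transition maps are injective, it is determined by its component $a_n$ for any $n$: the components $a_m$ with $m<n$ are images of $a_n$, and those with $m>n$ are preimages of $a_n$ under injective maps.

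For (ii), we use left exactness of $\lim$. The monomorphism $\nu : Y \longrightarrow A$ induces a monomorphism $\lim Y \longrightarrow \lim A$, and the square formed with $\nu_n : Y_n \longrightarrow A_n$ commutes. The composite $\lim Y \longrightarrow \lim A \longrightarrow A_n$ is injective by (i). It equals $\nu_n$ composed with the canonical map $\lim Y \longrightarrow Y_n$, so the latter is injective.

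For (iii), consider the constant inverse sequence $L(\infty)$ with $L = \lim Y$. The canonical maps $L \longrightarrow Y_n$ are compatible with the structure maps, so they assemble into a morphism $L(\infty) \longrightarrow Y$ in ${\mathcal C}$. This morphism is a monomorphism by (ii), and its cokernel is $W$. This gives a short exact sequence
\[ 0 \longrightarrow L(\infty) \longrightarrow Y \longrightarrow W \longrightarrow 0 \]
in ${\mathcal C}$. We then apply the 6-term lim-lim$^1$ exact sequence, using two facts: $\lim L(\infty) = L$ and $\Rlimitn\ L(\infty) = 0$. The latter holds because the structure maps are identities, so the system is Mittag-Leffler, or directly because $1-S$ is surjective on $\prod_n L$. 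This yields the exact sequence
\[ 0 \longrightarrow L \longrightarrow \Limitn\ Y_n \longrightarrow \Limitn\ W_n \longrightarrow 0 \longrightarrow \Rlimitn\ Y_n \longrightarrow \Rlimitn\ W_n \longrightarrow 0 .\]

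The map $L \longrightarrow \lim Y$ here is the identity, because it is induced by the canonical maps. Hence $\Limitn\ W_n = 0$. Moreover $\Rlimitn\ W_n \simeq \Rlimitn\ Y_n = 0$ by Theorem 3.1. The main point to check carefully is the identification $\lim L(\infty) \longrightarrow \lim Y$ with the identity of $L$; this is a direct unwinding of the definitions.
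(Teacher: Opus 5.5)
Your proposal is correct and follows the paper's proof essentially step for step: injectivity of the structure maps of $A$ gives (i), the commutative square relating $\lim \nu$ and $\nu_n$ gives (ii), and (iii) comes from the short exact sequence $0 \longrightarrow L(\infty) \longrightarrow Y \longrightarrow W \longrightarrow 0$ together with the 6-term lim-lim$^1$ sequence and the vanishing of $\Rlimit L(\infty)$ and $\Rlimitn\ Y_n$. Your explicit checks that $L \longrightarrow \lim Y$ is the identity and that $\Rlimit L(\infty) = 0$ simply spell out what the paper states more briefly.
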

\vspace{-0.05in}
\noindent
{\em Proof.}
(i) This follows since the structure maps of the inverse sequence 
$A$ are injective.

(ii) This follows from (i), taking into account the commutativity
of the following diagram
\[ \begin{array}{ccc}
    \lim Y & \stackrel{\lim \nu}{\longrightarrow} & \lim A \\
    \downarrow & & \downarrow \\
    Y_n & \stackrel{\nu_n}{\longrightarrow} & A_n 
   \end{array} \]
Here, the vertical maps are the canonical maps from the limits 
and the horizontal map at the top is the injective map induced 
by the monomorphism $\nu = (\nu_n : Y_n \longrightarrow A_n)_n$.

(iii) Let us denote the limit $\lim Y$ by $L$. The canonical maps 
$(L \longrightarrow Y_n)_n$ define a morphism of inverse sequences
$L(\infty) \longrightarrow Y$, which induces an isomorphism on 
limits. Invoking (ii), we may consider the short exact sequence 
of inverse sequences
\begin{equation}
 0 \longrightarrow L(\infty) \longrightarrow Y 
   \longrightarrow W \longrightarrow 0 . 
\end{equation}
Since $\Rlimit L(\infty) = \Rlimit Y=0$, the 6-term lim-lim$^1$ 
exact sequence associated with this short exact sequence shows 
that both groups $\lim W$ and $\Rlimit W$ are trivial, as needed. 
\hfill $\Box$

\begin{Proposition}
For any inverse sequence $(X_n)_n$ there is an another inverse 
sequence $(B_n)_n$ and a morphism of inverse sequences
$g : (B_n)_n \longrightarrow (X_n)_n$, which depends functorially 
on $(X_n)_n$, such that:

(i) the inverse sequence $(B_n)_n$ represents a decreasing 
filtration of $B=B_0$ by submodules,

(ii) the successive quotients $B_n/B_{n+1}$ are free modules for all $n$ 
and 

(iii) $g$ induces isomorphisms 
$\Limitn\ B_n \stackrel{\sim}{\longrightarrow} \Limitn\ X_n$ and
$\Rlimitn\ B_n \stackrel{\sim}{\longrightarrow} \Rlimitn\ X_n$.
\end{Proposition}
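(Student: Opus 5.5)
Our plan is to start from the short exact sequence $0 \to Y \stackrel{\nu}{\to} A \stackrel{p}{\to} X \to 0$ of Theorem 3.1 and divide out the limit of $Y$. Let $L = \lim Y$, viewed via Lemma 3.3(ii) as the inverse subsequence $L(\infty) \subseteq Y$, with quotient $W = Y/L(\infty)$ as in (8). Through $\nu$ we regard $L(\infty)$ as an inverse subsequence of $A$, and we set $B = A/L(\infty)$, so $B_n = A_n/L$. Since $p$ vanishes on $Y \supseteq L(\infty)$, it factors through a morphism $g : B \to X$. This gives a short exact sequence $0 \to W \to B \stackrel{g}{\to} X \to 0$. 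It is obtained from the one of Theorem 3.1 by the third isomorphism theorem, degreewise, and everything is functorial in $X$ because Theorem 3.1 and the passage to limits are functorial.

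For (iii) we use the 6-term lim-lim$^1$ exact sequence of $0 \to W \to B \to X \to 0$. By Lemma 3.3(iii), $\lim W = \Rlimit W = 0$. The exact sequence $0 \to \lim W \to \lim B \to \lim X \to \Rlimit W \to \Rlimit B \to \Rlimit X \to 0$ then shows at once that $g$ induces isomorphisms on both $\lim$ and $\Rlimit$.

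For (i) we check that the structure maps $B_n \to B_{n-1}$ are injective. Regard the $A_n$ as a decreasing chain of submodules of $A_0$, using that the structure maps of $A$ are injective (Theorem 3.1). Then $L = \lim Y \subseteq \lim A = \bigcap_n A_n$ sits inside every $A_n$, and the map $A_n/L \to A_{n-1}/L$ is the inclusion. So $(B_n)_n$ is the decreasing filtration of $B_0 = A_0/L$ by the submodules $A_n/L$.

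For (ii), $B_n/B_{n+1} = (A_n/L)/(A_{n+1}/L) \cong A_n/A_{n+1}$, which is free by Theorem 3.1. We expect the main obstacle to be bookkeeping rather than mathematics. We must verify that $L$ sits compatibly inside every $A_n$: the embedding $L(\infty) \to A$ is a morphism of inverse sequences whose components $L \to A_n$ are the injective maps of Lemma 3.3(i),(ii) composed with $\lim \nu$. We must also verify that the quotients are formed compatibly with the structure maps. Note that the $B_n$ themselves need not be free, which is exactly why the statement drops that condition.
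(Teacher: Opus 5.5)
Your proposal is correct and follows the paper's proof. Your $B = A/L(\infty)$ is exactly the pushout of $Y \to W$ and $Y \to A$ used in diagram (9), and the conclusion via the 6-term $\lim$-$\lim^1$ sequence and Lemma 3.3(iii) is the same; your direct check that the $B_n = A_n/L$ form a decreasing chain in $A_0/L$ with quotients $A_n/A_{n+1}$ simply replaces the paper's snake-lemma argument.
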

\vspace{-0.05in}
\noindent
{\em Proof.}
We consider the inverse sequences $Y=(Y_n)_n$, $A=(A_n)_n$
and the short exact sequence
\[ 0 \longrightarrow Y \longrightarrow A 
     \longrightarrow X \longrightarrow 0 \]
of Theorem 3.1, which is functorial in $X=(X_n)_n$. We denote
the limit $\lim Y$ by $L$, as in the proof of Lemma 3.3(iii),
and consider the short exact sequence (8). Then, the pushout 
of the two morphisms $Y \longrightarrow W$ and 
$Y \longrightarrow A$ in ${\mathcal C}$ defines the inverse 
sequence $B = (B_n)_n$ and the morphism $g$, as pictured below
\begin{equation}
\begin{array}{ccccccccc}
    & & 0 & & 0 & & & & \\
    & & \downarrow & & \downarrow & & & & \\
    & & L(\infty) & = & L(\infty) & & & & \\
    & & \downarrow & & \downarrow & & & & \\
    0 & \longrightarrow & Y & \longrightarrow
    & A & \longrightarrow & X & \longrightarrow & 0 \\
    & & \downarrow & & \downarrow & & \parallel & & \\
    0 & \longrightarrow & W & \longrightarrow
    & B & \stackrel{g}{\longrightarrow} & X 
    & \longrightarrow & 0 \\
    & & \downarrow & & \downarrow & & & & \\
    & & 0 & & 0 & & & &
\end{array} 
\end{equation}
An
application of the snake lemma to the following diagram
\[ \begin{array}{ccccccccc}
    0 & \longrightarrow & L & \longrightarrow
    & A_n & \longrightarrow & B_n & \longrightarrow & 0 \\
    & & \parallel & & \downarrow & & \downarrow & & \\
    0 & \longrightarrow & L & \longrightarrow
    & A_{n-1} & \longrightarrow & B_{n-1} & \longrightarrow & 0 \\
   \end{array} \]
shows that the structure map $B_n \longrightarrow B_{n-1}$ 
is a monomorphism, whose cokernel coincides with the cokernel 
of the structure map $A_n \longrightarrow A_{n-1}$. In 
particular, the structure maps of the inverse sequence $B$ 
are (split) monomorphisms with a free cokernel in all degrees. 
Since $\Limitn\ \, W_n \! = \Rlimitn\ W_n \! = 0$ (cf.\ Lemma 
3.3(iii)), the 6-term lim-lim$^1$ exact sequence associated with 
the horizontal short exact sequence at the bottom of diagram (9) 
implies that the morphism $g$ induces isomorphisms
$\Limitn\ B_n \stackrel{\sim}{\longrightarrow} \Limitn\ X_n$ and
$\Rlimitn\ B_n \stackrel{\sim}{\longrightarrow} \Rlimitn\ X_n$. 
\hfill $\Box$

\bigskip

\noindent
{\em Acknowledgments.}
It is a pleasure to thank George Raptis for useful comments and 
suggestions on an earlier draft of this paper, that improved its 
quality.

\vspace{0.05in}

{\small {\sc Department of Mathematics,
             University of Athens,
             Athens 15784,
             Greece}}

{\em E-mail address:} {\tt emmanoui@math.uoa.gr}

\end{document}